\title{Optical Tomography with Scattered Rays}
\author[Chung]{Francis J. Chung}
\address{Department of Mathematics, University of Kentucky, Lexington KY 40506, USA}
\author[Hensley]{Faith Hensley}
\address{Department of Mathematics, University of Kentucky, Lexington KY 40506, USA}
\newtheorem{theorem}{Theorem}[section]
\newtheorem{proposition}[theorem]{Proposition}
\def \grad{\nabla}
\def \e{\varepsilon}
\newcommand\norm[1]{\left\lVert#1\right\rVert}
\numberwithin{equation}{section}
\begin{document}

\begin{abstract}
We consider the inverse problem of reconstructing the scattering coefficient of a simple radiative transport equation (RTE) used to model light propagation inside a scattering medium. To do so, we extract information from the second term in the collision expansion, that is, light that has been scattered by a single collision, for solutions to the RTE. We show that with proper sources and measurements, the scattering coefficient for the RTE can be obtained via an algebraic formula, resulting in a reconstruction with improved stability compared to the normal X-ray transform inversion method. We extend these theorems to apply to a multi-frequency setting in which photons change frequency after collisions. Then, we discuss potential applications of our theory for 3D image reconstruction. 
\end{abstract}

\maketitle

\section{Introduction}\label{Introduction} 

Optical tomography is the problem of reconstructing optical parameters on the interior of an object from measurements of light intensity on the boundary. If the medium to be imaged has low scattering, then this can be reduced to the problem of inverting the X-ray transform; this theory forms the mathematical basis for CT scans ~\cite{Qui, Lou}.  

In a regime with higher scattering, light propagation can be modeled using a radiative transport (or transfer) equation, abbreviated in either case by RTE ~\cite{DauLio}.  Then optical tomography becomes the mathematical inverse problem of reconstructing the coefficients of the RTE from boundary measurements of the solutions. This problem is known to be solvable; see for example ~\cite{ChoSte}. Many variations and extensions of this problem exist as well  -- see ~\cite{ArrSch} for a more extensive survey.

In these notes, we revisit the basic inverse problem of the RTE with a special focus on extracting information from light which has been scattered by a single collision.  The inverse problem for the RTE in this form has been previously well studied by a number of authors, usually under the name of broken-ray transforms or star transforms.  Approaches to this problem include Fourier transform ideas ~\cite{FloMarSchBRT, WalOSu} and cancellation and derivative based arguments ~\cite{KatKry, ZhaSchMar, FloMarSchNonR, KryKat}; similar ideas are also used to analyze single collision scattered light in ~\cite{Chu}. Numerical approaches have also been tried -- see ~\cite{GevSchCheGup, ItoTod} for some examples.  

Related concepts include conical Radon transforms and V-line transforms. See ~\cite{TerKucKun} and ~\cite{AmbSurvey}, respectively, for overviews of some of these ideas.

Our main result is that with correct inputs and measurements, the scattering coefficient for the RTE can be recovered via an algebraic formula. As a result of the simplicity of the formula, the reconstruction has very good stability compared to normal X-ray transform or derivative based inversions.  

To state the main results more precisely, let $\Omega$ be a bounded domain in $\mathbb{R}^n$, $n \geq 3$, with $C^1$ boundary. Suppose $\sigma \in C(\overline{\Omega})$ and $k \in C(\Omega \times S^{n-1} \times S^{n-1})$ are positive functions, and consider the RTE 
\begin{equation}\label{RTE} 
\theta \cdot \grad_x u(x,\theta) = -\sigma(x)u(x,\theta) + \int_{S^{n-1}}k(x,\theta,\theta') u(x,\theta') \, d\theta'. \end{equation}
Here $u : \Omega \times S^{n-1} \rightarrow \mathbb{R}$ is the specific intensity of light -- that is, $u(x,\theta)$ represents the intensity of light at $x$ in direction $\theta$. The coefficients $\sigma$ and $k$ can be thought of as governing absorption and scattering, respectively.  

Throughout these notes, we make the following two common assumptions on $k$ (See e.g. ~\cite{ChuSch,HenGre}): first, that for each fixed $x$ in $\Omega$, 
\begin{equation}\label{kSymmetry}
k(x,\theta,\theta') = k(x,\theta',\theta).
\end{equation}
(Note that in particular this holds if $k(x,\theta, \theta')$ depends only on $x$ and the angle between $\theta$ and $\theta'$, which is physically plausible.)

Second, for each fixed $x \in \Omega$, we assume that
\begin{equation}\label{Nongeneration}
\sigma(x) \geq \int_{S^{n-1}} k(x,\theta, \theta') \, d\theta;
\end{equation}
roughly speaking, this means that scattering does not generate new light.

To obtain a boundary value problem for this equation, we define 
\[
\partial \Omega_{\pm} = \{ (x,\theta) \in \partial \Omega \times S^{n-1}| \pm \nu(x) \cdot \theta > 0\},
\]
where $\nu(x)$ is the outward normal at $x \in \partial \Omega$. 

The following proposition guarantees the solvability of \eqref{RTE}.

\begin{proposition}\label{Solvability}
Suppose $f \in L^p(\partial \Omega_-)$, $1 \leq p \leq \infty$. Under the conditions on $\sigma$ and $k$ listed above, there exists a unique $u \in L^p(\Omega \times S^{n-1})$ such that $u$ solves 
\begin{align*}
\theta \cdot \grad_x u(x,\theta) &= -\sigma(x)u(x,\theta) + \int_{S^{n-1}}k(x,\theta,\theta') u(x,\theta') \, d\theta'. \\
u|_{\partial \Omega_-} &= f. \\
\end{align*}
\end{proposition}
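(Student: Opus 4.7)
The plan is to recast the boundary value problem as a fixed-point equation along characteristics and then solve it by a Neumann series (the collision expansion). For $(x,\theta) \in \Omega \times S^{n-1}$, let $\tau_-(x,\theta) = \inf\{t > 0 : x - t\theta \notin \Omega\}$ and set $E(x,\theta,t) = \int_0^t \sigma(x - s\theta)\, ds$. Integrating the equation $\theta \cdot \nabla_x u + \sigma u = Ku$ along the characteristic through $(x,\theta)$, where $Ku(x,\theta) = \int_{S^{n-1}} k(x,\theta,\theta') u(x,\theta')\, d\theta'$, a solution must satisfy
\[
u(x,\theta) = Jf(x,\theta) + A(Ku)(x,\theta),
\]
where $Jf(x,\theta) = e^{-E(x,\theta,\tau_-)} f(x - \tau_-\theta, \theta)$ is the ballistic (uncollided) term and $Ag(x,\theta) = \int_0^{\tau_-(x,\theta)} e^{-E(x,\theta,t)} g(x-t\theta, \theta)\, dt$ inverts the pure absorption operator $\theta\cdot\nabla_x + \sigma$ with zero inflow data. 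The main task is to prove $I - AK$ is invertible on $L^p(\Omega \times S^{n-1})$, so that $u = \sum_{n \geq 0} (AK)^n Jf$ converges and defines the unique solution.

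The main ingredient is the bound $\|AK\|_{L^p \to L^p} < 1$. First I would do the $L^\infty$ case, since it showcases the role of the nongeneration assumption \eqref{Nongeneration} most transparently: for $v \in L^\infty$,
\[
|Kv(x,\theta)| \leq \|v\|_{\infty} \int_{S^{n-1}} k(x,\theta,\theta')\, d\theta' \leq \sigma(x)\|v\|_{\infty},
\]
and therefore
\[
|AKv(x,\theta)| \leq \|v\|_{\infty} \int_0^{\tau_-} e^{-E(x,\theta,t)}\sigma(x - t\theta)\, dt = \|v\|_{\infty}\bigl(1 - e^{-E(x,\theta,\tau_-)}\bigr).
\]
Since $\sigma$ is continuous on $\overline{\Omega}$ and $\Omega$ is bounded, $E \leq \|\sigma\|_\infty \mathrm{diam}(\Omega) =: M$, so $\|AKv\|_{\infty} \leq (1-e^{-M})\|v\|_\infty$, giving a contraction. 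For the $L^1$ case I would use Fubini together with the symmetry \eqref{kSymmetry} and \eqref{Nongeneration} (which gives $\int k(x,\theta,\theta')\, d\theta \leq \sigma(x)$ as well) to bound $\int\int |AKv|\, dx\, d\theta$ by $(1-e^{-M})\|v\|_{L^1}$. The intermediate cases $1 < p < \infty$ then follow by the Riesz--Thorin interpolation theorem.

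With $\|AK\|_{L^p \to L^p} < 1$ established, the Neumann series $\sum_{n\geq 0}(AK)^n Jf$ converges absolutely in $L^p$ (note $\|Jf\|_{L^p(\Omega\times S^{n-1})} \lesssim \|f\|_{L^p(\partial\Omega_-)}$ by a standard change of variables from $\partial\Omega_- \times \mathbb{R}_+$ to $\Omega \times S^{n-1}$ along characteristics), and its sum $u$ satisfies $u = Jf + AKu$, hence solves the boundary value problem. Uniqueness is immediate from invertibility of $I - AK$: any two $L^p$ solutions $u_1, u_2$ satisfy $(I - AK)(u_1 - u_2) = 0$, forcing $u_1 = u_2$.

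The principal obstacle is the interpolation step: verifying that $AK$ really maps $L^p \to L^p$ with the required contraction bound uniformly in $p$ needs the symmetry \eqref{kSymmetry} to control the $L^1$ endpoint, since nongeneration is asymmetric in the two angular variables. The geometric part (boundedness of $\tau_-$ and finiteness of $M$) is where the bounded domain hypothesis and continuity of $\sigma$ enter; everything else is a routine consequence of the Neumann series.
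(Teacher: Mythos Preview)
Your approach is exactly what the paper does: the paper gives no self-contained proof of this proposition but refers to \cite{EggSch} and to the collision-expansion discussion in Section~\ref{Collisions}, where only the $L^\infty$ case is worked out, via precisely the contraction bound you wrote (your $A$ is the paper's $T^{-1}$, and the use of \eqref{Nongeneration} together with \eqref{AttenuationDerivative} to get $\|T^{-1}K\|_{\infty\to\infty}\leq 1-e^{-M}$ is identical).

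The one place your sketch needs more care is the $L^1$ endpoint. After Fubini and the change of variables $y=x-t\theta$ one arrives at
\[
\|AKv\|_{L^1} \leq \int_\Omega \int_{S^{n-1}} |v(y,\theta')|\left(\int_{S^{n-1}} k(y,\theta,\theta')\,w(y,\theta)\, d\theta\right) d\theta'\, dy,
\qquad
w(y,\theta)=\int_0^{\tau_+(y,\theta)} e^{-\int_0^t \sigma(y+r\theta)\,dr}\,dt.
\]
In the $L^\infty$ argument the $\sigma$ produced by bounding $Kv$ sits at the \emph{same} point $x-t\theta$ as the $\sigma$ in the exponential, and the two combine into the exact derivative $-\partial_t e^{-E}$. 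Here, nongeneration (with symmetry) gives only $\int_{S^{n-1}} k(y,\theta,\theta')\,d\theta \leq \sigma(y)$, while $w(y,\theta)$ depends on $\sigma$ along the outgoing ray from $y$ in direction $\theta$; these do not collapse to $(1-e^{-M})$ unless $\sigma$ is constant. So the claimed constant does not follow from ``Fubini $+$ symmetry $+$ \eqref{Nongeneration}'' alone. You should either supply the missing estimate (for instance, an argument that $\sigma(y)\,w(y,\theta)$ is uniformly $<1$, which is not automatic under the stated hypotheses), use an alternative route to the $L^1$ bound (duality, weighted norms, or showing some power $(AK)^N$ contracts), or---as the paper itself does---defer the general $L^p$ theory to \cite{EggSch}. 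The interpolation strategy is fine once both endpoints are actually in hand.
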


For a proof, see \cite{EggSch} or the discussion in Section \ref{Collisions}.

For the purposes of these notes, we'll adopt a special notation for solutions to the boundary value problem in Proposition \ref{Solvability} with highly focused boundary conditions. 

To do this, we'll fix a function $\psi \in C^{\infty}_0(\mathbb{R}^n)$ with some normalization conditions (see the proof of Proposition \ref{OutputForms}). Given $\varepsilon > 0$ set 
\begin{equation}\label{ApproxIdentity}
\varphi_{-}(x'',\theta'') = \psi\left(\frac{x''-a}{\varepsilon}\right)  \psi\left(\frac{\theta''-\zeta}{\varepsilon}\right).
\end{equation}
Then for $(a,\zeta) \in \partial \Omega_-$, we can interpret $(a,\zeta)$ as a point in $\mathbb{R}^n \times \mathbb{R}^n$ by taking $S^{n-1}$ to be embedded in $\mathbb{R}^n$ as the standard unit sphere. Define $u^{\e}_{a,\zeta}$ and $U^{\e}_{a,\zeta}$ to be solutions to the RTE \eqref{RTE} with boundary conditions
\begin{equation}\label{DefnU}
u^{\e}_{a,\zeta}(x,\theta)|_{\partial \Omega_-} = \varphi_{-}(x-a,\theta - \zeta)  \mbox{ and } U^{\e}_{a,\zeta}(x,\theta)|_{\partial \Omega_-} = \e^{-n}\varphi_{-}(x-a,\theta - \zeta),
\end{equation}
respectively, where $\varphi_-$ acts as an approximation of identity on $\partial \Omega_-$.  

This brings us to the statement of the main theorem. 

\begin{theorem}\label{MainTheorem}
Let $x \in \Omega$, and $\zeta, \eta \in S^{n-1}$. There exist points $(a, \zeta), (b,\eta)$ on $\partial \Omega_-$ together with corresponding points $(c,\zeta)$ and $(d,\eta)$ on $\partial \Omega_+$, such that 
\begin{equation}
\lim_{\e \rightarrow 0} \frac{U^{\e}_{a,\zeta}(d,\eta)U^{\e}_{b,\eta}(c,\zeta) }{u^{\e}_{a,\zeta}(c,\zeta) u^{\e}_{b,\eta}(d,\eta)} = k^2(x, \zeta,\eta).
\end{equation}
\end{theorem}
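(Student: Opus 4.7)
The plan is to analyze each of the four factors in the ratio using the collision expansion $u = u_0 + u_1 + u_2 + \cdots$, where $u_0$ is the ballistic (Beer--Lambert attenuated) transport of the boundary data and $u_{n+1}$ results from a single application of the scattering integral operator followed by ballistic propagation. For the concentrated boundary data of $u^{\e}_{a,\zeta}$ or $U^{\e}_{a,\zeta}$, the ballistic part $u_0$ localizes on the ray from $a$ in direction $\zeta$; the single-scattered part $u_1$ at a fixed exit point $(d,\eta)$ only receives contributions from scattering events at the intersection of this ray with the backward ray from $d$ in direction $-\eta$; and higher-order collisions contribute at strictly smaller order in $\e$.

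First, I would fix the geometry by choosing $a, c \in \partial\Omega$ so that $a, x, c$ are collinear in direction $\zeta$, with $(a,\zeta) \in \partial\Omega_-$ and $(c,\zeta) \in \partial\Omega_+$, and analogously choosing $b, d$ in direction $\eta$. The existence of such points follows from the boundedness of $\Omega$ and is generic in $x, \zeta, \eta$. The point $(c,\zeta)$ lies on the ballistic ray from $(a,\zeta)$, so $u^{\e}_{a,\zeta}(c,\zeta)$ is dominated by its ballistic term. A direct calculation with $u_0$ gives $\lim_{\e \to 0} u^{\e}_{a,\zeta}(c,\zeta) = \psi(0)^2 \exp(-\tau(a,c))$, where $\tau(\cdot,\cdot)$ denotes the line-segment attenuation integral of $\sigma$; the analogous formula holds for $u^{\e}_{b,\eta}(d,\eta)$. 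By contrast, $(d,\eta)$ is not on the ballistic ray from $(a,\zeta)$, so once $\e$ is small enough the ballistic term vanishes identically there, and the leading behavior of $U^{\e}_{a,\zeta}(d,\eta)$ comes entirely from the single-scattered term $U_1^{\e}(d,\eta)$.

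The central computation is the asymptotic of $U_1^{\e}(d,\eta)$. Parametrizing the backward ray from $d$ as $y(s) = d - s\eta$, substituting the concentrated ballistic $U_0^{\e}$, and changing variables $\theta' = \zeta + \e\xi$ (so $d\theta' \sim \e^{n-1}\, d\xi$) and $s = s_0 + \e t$ (with $s_0 = |d-x|$ locating $y(s_0) = x$), the $\e^{n}$ from the change of variables cancels the $\e^{-n}$ in the normalization of $U^{\e}$. Continuity of $\sigma, k$, and $\psi$ then collapses the integrand to its value at the scattering point $x$, yielding
\[
\lim_{\e \to 0} U^{\e}_{a,\zeta}(d,\eta) = k(x,\zeta,\eta) \exp(-\tau(a,x) - \tau(x,d))\, G(x;\zeta,\eta),
\]
where $G$ is a positive geometric factor arising from the Jacobian of the boundary projection map $(s,\theta') \mapsto p(y(s),\theta')$ near $(s_0,\zeta)$ together with the remaining $\psi$-integrals. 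The symmetric argument applied to $U^{\e}_{b,\eta}(c,\zeta)$ gives the analogous limit with $\zeta$ and $\eta$ interchanged and $(a,c)$ replaced by $(b,d)$.

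Assembling the ratio, three cancellations deliver the theorem: additivity $\tau(a,c) = \tau(a,x) + \tau(x,c)$ and $\tau(b,d) = \tau(b,x) + \tau(x,d)$ cancels the exponentials exactly; the symmetry hypothesis \eqref{kSymmetry} converts $k(x,\zeta,\eta)\, k(x,\eta,\zeta)$ into $k^2(x,\zeta,\eta)$; and the geometric prefactors $G(x;\zeta,\eta)\, G(x;\eta,\zeta)$ together with the $\psi(0)^4$ in the denominator cancel once $\psi$ is normalized as prescribed in Proposition \ref{OutputForms}. The principal technical obstacles I anticipate are the explicit Jacobian computation of the boundary projection map (so that the two geometric prefactors combine correctly with the chosen normalization of $\psi$) and the uniform bounds needed to show that the multiply-scattered terms $U_n^{\e}$ for $n \geq 2$ are strictly lower order than $U_1^{\e}$ as $\e \to 0$; both of these are workable but require care with the concentrating boundary data and the iterated scattering kernels.
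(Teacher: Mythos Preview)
Your proposal is correct and matches the paper's argument: the paper defines $a,b,c,d$ exactly as you do, invokes Proposition~\ref{OutputForms} (whose proof is precisely the collision-expansion computation you outline) to obtain $\lim_{\e\to 0} u^{\e}_{a,\zeta}(c,\zeta)=\alpha(a,c)=\alpha(a,x)\alpha(x,c)$ and $\lim_{\e\to 0} U^{\e}_{a,\zeta}(d,\eta)=\alpha(a,x)k(x,\zeta,\eta)\alpha(x,d)$, and then cancels the attenuation factors and applies the symmetry \eqref{kSymmetry}. The one refinement worth noting is that the paper arranges the normalization of $\psi$ (separately for the angular variable and for the one-dimensional parameter along the exit ray) so that each individual limit is already free of any geometric prefactor $G$, rather than carrying two such factors and hoping $G(x;\zeta,\eta)G(x;\eta,\zeta)$ cancels against $\psi(0)^4$ in the ratio; since those Jacobians involve the boundary geometry at $a$ and $b$, which does not appear in the denominator, the per-factor normalization is the cleaner route.
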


Figure 1 illustrates the choice of $a,b,c,d$; see Section \ref{ProofofMains} for the full explanation.

This reconstruction is of interest for two major reasons.  One is that because it reconstructs the scattering kernel $k$ point by point, it raises the possibility of \emph{local reconstructions} even when global reconstructions are made difficult, e.g. by the presence of obstacles.

The second is that it follows directly from Theorem \ref{MainTheorem} that $k$ can be obtained from boundary data via an algebraic formula, which gives a remarkably stable reconstruction.  

The stability can be quantified via the albedo map $\mathcal{A}: L^\infty(\partial \Omega_-) \rightarrow L^\infty(\partial \Omega_+)$ by $\mathcal{A}(f) = u|_{\partial \Omega_+}$, where $u$ is the solution to the RTE \eqref{RTE} with boundary value $f$ on $\partial \Omega_-$.  

\begin{theorem}\label{Stability}
Suppose $\sigma_1, \sigma_2$ and $k_1, k_2$ are a priori bounded above by a constant $M$, and define albedo maps $\mathcal{A}_1$ and $\mathcal{A}_2$ respectively. Then there exists a constant $C$ depending only on $M$ and $\Omega$ such that 
\[
\|k_1^2 - k^2_2\|_{L^{\infty}(\Omega \times S^{n-1} \times S^{n-1})} \leq C\|\mathcal{A}_1 - \mathcal{A}_2\|_{\infty}.
\]
\end{theorem}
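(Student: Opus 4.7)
The plan is to work directly from the algebraic limit formula in Theorem \ref{MainTheorem} applied to both coefficient pairs. Fix $x \in \Omega$ and $\zeta, \eta \in S^{n-1}$, choose $a, b, c, d$ as in Theorem \ref{MainTheorem}, and for $i = 1, 2$ let $R_i(\varepsilon)$ denote the ratio of Theorem \ref{MainTheorem} formed from the solutions $u^{\varepsilon, i}$ and $U^{\varepsilon, i}$ to the RTE with coefficients $(\sigma_i, k_i)$. By linearity of the albedo map, each of the four factors in $R_i$ is $\mathcal{A}_i$ applied to an appropriate boundary input (either $\varphi_-$ or $\varepsilon^{-n}\varphi_-$) and evaluated at a boundary point. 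Theorem \ref{MainTheorem} gives $R_i(\varepsilon) \to k_i^2(x, \zeta, \eta)$ as $\varepsilon \to 0$, so
\[
k_1^2(x, \zeta, \eta) - k_2^2(x, \zeta, \eta) = \lim_{\varepsilon \to 0}\bigl(R_1(\varepsilon) - R_2(\varepsilon)\bigr),
\]
and the goal reduces to bounding $|R_1(\varepsilon) - R_2(\varepsilon)|$ by $C(M, \Omega)\,\|\mathcal{A}_1 - \mathcal{A}_2\|_\infty$ with a constant independent of $\varepsilon$ and of $(x, \zeta, \eta)$.

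Next, I would expand $R_1(\varepsilon) - R_2(\varepsilon)$ via the telescoping identity for the difference of two ratios of products, so that $R_1 - R_2$ becomes a weighted sum of the four evaluation differences
\[
U^{\varepsilon, 1}_{a,\zeta}(d,\eta) - U^{\varepsilon, 2}_{a,\zeta}(d,\eta), \qquad U^{\varepsilon, 1}_{b,\eta}(c,\zeta) - U^{\varepsilon, 2}_{b,\eta}(c,\zeta),
\]
\[
u^{\varepsilon, 1}_{a,\zeta}(c,\zeta) - u^{\varepsilon, 2}_{a,\zeta}(c,\zeta), \qquad u^{\varepsilon, 1}_{b,\eta}(d,\eta) - u^{\varepsilon, 2}_{b,\eta}(d,\eta).
\]
The denominators $u^{\varepsilon, i}_{a,\zeta}(c,\zeta)\,u^{\varepsilon, i}_{b,\eta}(d,\eta)$ are bounded below uniformly by Beer--Lambert attenuation along the two ballistic rays through $x$ (using $\sigma_i \leq M$ and the finiteness of the diameter of $\Omega$), while the numerators and all $U^\varepsilon$-values are bounded above uniformly in $\varepsilon$ via the single-scattering bound that drives the proof of Theorem \ref{MainTheorem}. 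The two $u^\varepsilon$-differences are immediately controlled by $\|\mathcal{A}_1 - \mathcal{A}_2\|_\infty \cdot \|\varphi_-\|_\infty$, which is bounded independently of $\varepsilon$. Finally, $a, b, c, d$ depend continuously on $(x, \zeta, \eta)$ on the compact set $\overline{\Omega} \times S^{n-1} \times S^{n-1}$, so all geometric constants are uniform and a pointwise bound will upgrade to the $L^\infty$ estimate claimed.

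The main obstacle is the two $U^\varepsilon$-differences: since $\|\varepsilon^{-n}\varphi_-\|_\infty = O(\varepsilon^{-n})$, the naive operator-norm estimate
\[
\bigl|U^{\varepsilon, 1}_{a,\zeta}(d,\eta) - U^{\varepsilon, 2}_{a,\zeta}(d,\eta)\bigr| \leq C\varepsilon^{-n}\|\mathcal{A}_1 - \mathcal{A}_2\|_\infty
\]
blows up as $\varepsilon \to 0$, even though both $U^\varepsilon$-values themselves remain $O(1)$ at the off-ballistic endpoint $(d, \eta)$. The expected resolution is to reuse the single-scattering expansion developed in the proof of Theorem \ref{MainTheorem}: expressing each $U^{\varepsilon, i}_{a,\zeta}(d,\eta)$ as an integral of a continuous single-scattering kernel against $\varepsilon^{-n}\varphi_-$ (plus higher-order multiple-scattering terms that are uniformly small in $\varepsilon$), one can absorb the $\varepsilon^{-n}$ input factor against the $\varepsilon^n$-size geometric support of $\varphi_-$ before invoking the operator-norm inequality, yielding an $\varepsilon$-independent bound proportional to $\|\mathcal{A}_1 - \mathcal{A}_2\|_\infty$. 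Inserting this refined estimate into the telescoping expansion and sending $\varepsilon \to 0$ then produces the theorem.
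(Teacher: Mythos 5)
Your proposal follows essentially the same route as the paper's proof: apply Theorem \ref{MainTheorem} to both coefficient pairs, expand the difference of the two ratios via the telescoping identity for a difference of four-fold products, bound the denominators below by the attenuation factors (using $\sigma_i \leq M$ and the diameter of $\Omega$), bound the remaining factors above using Proposition \ref{OutputForms}, and control each of the four evaluation differences by the albedo-map difference. The one point where you go beyond the paper is the ``main obstacle'' you flag: the paper simply asserts $|U^{\varepsilon}_{a,\zeta}(d,\eta) - V^{\varepsilon}_{a,\zeta}(d,\eta)| \leq \|\mathcal{A}_1-\mathcal{A}_2\|_{\infty}$ with no $\varepsilon^{-n}$ loss and does not address the issue you raise, so identifying it is to your credit. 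Be aware, though, that your sketched repair (trading the $\varepsilon^{-n}$ normalization against the $O(\varepsilon^{2n-2})$-measure support of $\varphi_-$) cannot be carried out with the $L^{\infty}\to L^{\infty}$ operator norm alone, since that norm is insensitive to the smallness of the support of the input; it would require reinterpreting $\|\mathcal{A}_1-\mathcal{A}_2\|_{\infty}$ as an $L^1\to L^{\infty}$-type quantity or working with the Schwartz kernel of $\mathcal{A}_1-\mathcal{A}_2$, neither of which the paper does.
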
 

Note that this is a Lipschitz-style stability result which does not depend on derivatives of the data, in comparison with the standard stability results for the inversion of the X-ray transform ~\cite{LouNat}.  

The remainder of these notes are organized as follows: In Section \ref{Collisions}, we gather some basic results about the RTE and its solutions.  The proofs of Theorem \ref{MainTheorem} and \ref{Stability} are given in Section \ref{ProofofMains}.  In Section \ref{MFT}, we consider versions of these theorems in the setting considered in ~\cite{FloMarSchNonR, KryKat} in which photons experience a change in frequency after collisions. Finally, in Section \ref{Conclusions}, we consider applications of the main theorems.  

\section{Collision Expansion}\label{Collisions}

First we need to examine the form of solutions to the RTE.  For this we'll need some notation.

Suppose $x, y \in \bar{\Omega}$, such that the line segment from $x$ to $y$ is contained in $\Omega$. We define the attenuation factor
\begin{equation}\label{Attenuation}
\alpha(x,y) = \exp\left(- \int_0^{\|y-x\|}\sigma(x + s(\widehat{y-x})) \, ds\right).
\end{equation}
Here $\widehat{y-x}$ is the unit vector in the direction of $y-x$. Note that $\alpha$ is symmetric in $x$ and $y$, and for $t \geq 0$ 
\begin{equation}\label{AttenuationDerivative}
\partial_t \alpha(x, x+ t\theta) = -\sigma(x + t\theta)\alpha(x, x+ t\theta).
\end{equation}
From this it follows that the solution to the transport equation
\begin{equation}\label{JEqn}
\begin{split}
\theta \cdot \grad u +\sigma u &= 0 \\
u|_{\partial \Omega_-} &= f \\
\end{split}
\end{equation}
is given by 
\begin{equation}\label{DefnJ}
u(x,\theta) = Jf(x,\theta) := \alpha(x, x_{\theta-})f(x_{\theta-}, \theta),
\end{equation}
where $x_{\theta \pm}$ is the first point on $\partial \Omega$ on the ray from $x$ in direction $\pm \theta$. 

Moreover, it follows that the solution to the transport equation
\begin{equation}\label{TEquation}
\begin{split}
\theta \cdot \grad u + \sigma u &= S(x,\theta)\\
u|_{\partial \Omega_-} &= 0 \\
\end{split}
\end{equation}
is given by 
\begin{equation}\label{TInverseDefn}
u(x,\theta) = T^{-1}S := \int_0^{\|x - x_{\theta-}\|} \alpha(x, x - t\theta)S(x - t\theta, \theta)dt.
\end{equation}
Note that since $\alpha < 1$ and $\|x - x_{\theta-}\|$ is uniformly bounded on $\Omega$, 
\begin{equation}\label{TInverseBound}
\|T^{-1} S\|_{L^{\infty}(\Omega \times S^{n-1})} \lesssim \|S\|_{L^{\infty}(\Omega \times S^{n-1})}.
\end{equation}

\begin{proposition}\label{CollisionExpansion}
Define the operator
\[
Ku(x,\theta) = \int_{S^{n-1}} k(x,\theta, \theta')u(x,\theta') d\theta'
\]
Then there exists $0 < C < 1$ such that
\begin{equation}\label{KBound}
\|T^{-1}K\|_{L^{\infty}(\Omega \times {S^{n-1}}) \rightarrow L^{\infty}(\Omega \times {S^{n-1}})} < C, 
\end{equation}
and the RTE \eqref{RTE} with boundary condition $u|_{\partial \Omega_-} = f$
for some $f \in L^{\infty}(\partial \Omega_{-})$, has a solution of the form $u$
\begin{equation}\label{CollisionExp}
u = (1 + T^{-1}K + (T^{-1}K)^2 + \ldots)Jf.
\end{equation}
\end{proposition}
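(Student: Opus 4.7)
The plan is to establish the operator norm bound \eqref{KBound} first, and then use a standard Neumann series argument to construct a solution of the form \eqref{CollisionExp}.

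For the bound, I would start from the pointwise estimate
\[
|Ku(x,\theta)| \leq \|u\|_{L^\infty} \int_{S^{n-1}} k(x,\theta,\theta') \, d\theta',
\]
and use the symmetry \eqref{kSymmetry} to rewrite the integral as $\int_{S^{n-1}} k(x,\theta',\theta)\, d\theta'$, which by the non-generation condition \eqref{Nongeneration} is bounded above by $\sigma(x)$. Inserting $|Ku(x,\theta)| \leq \sigma(x) \|u\|_{L^\infty}$ into the explicit formula \eqref{TInverseDefn} and recognizing the integrand via \eqref{AttenuationDerivative} as $-\partial_t \alpha(x, x - t\theta)$ causes the integral to telescope, yielding
\[
|T^{-1}Ku(x,\theta)| \leq \|u\|_{L^\infty}\bigl(1 - \alpha(x, x_{\theta-})\bigr).
\]
Since $\sigma$ is continuous on the compact set $\overline{\Omega}$, we have $\alpha(x, x_{\theta-}) \geq \exp(-\|\sigma\|_{L^\infty} \operatorname{diam}(\Omega))$ uniformly in $(x,\theta)$, and this delivers a constant $C < 1$ as required.

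With the bound in hand, the Neumann series $\sum_{j \geq 0} (T^{-1}K)^j$ converges in operator norm on $L^\infty(\Omega \times S^{n-1})$, so I would define $u$ by \eqref{CollisionExp} and verify directly that it solves the RTE. Applying $T = \theta \cdot \grad + \sigma$ term-by-term annihilates the $j = 0$ contribution because $Jf$ solves \eqref{JEqn}, and for $j \geq 1$ each term reduces to $K(T^{-1}K)^{j-1} Jf$; reindexing the resulting sum gives $Tu = Ku$, which is precisely the RTE. The boundary condition follows since $Jf|_{\partial \Omega_-} = f$ by \eqref{DefnJ}, while every term $(T^{-1}K)^j Jf$ with $j \geq 1$ has zero trace on $\partial \Omega_-$ by the construction of $T^{-1}$ in \eqref{TEquation}.

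The main obstacle is establishing the strict inequality in the norm bound: this requires leveraging both the symmetry \eqref{kSymmetry} (to move the integration in $k$ from the unprimed to the primed variable so that \eqref{Nongeneration} applies) and the strict positivity of $\alpha$ on $\overline{\Omega}$, which in turn depends on continuity of $\sigma$ and boundedness of $\Omega$. Once the bound is secured, the rest of the proof is a routine telescoping and reindexing exercise.
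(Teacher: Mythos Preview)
Your proposal is correct and follows essentially the same route as the paper: bound $|T^{-1}Ku|$ by $(1-\alpha(x,x_{\theta-}))\|u\|_{L^\infty}$ via \eqref{Nongeneration} and \eqref{AttenuationDerivative}, then invoke the Neumann series. One point worth noting: your explicit invocation of the symmetry \eqref{kSymmetry} to convert $\int k(x,\theta,\theta')\,d\theta'$ into the form to which \eqref{Nongeneration} literally applies is a detail the paper passes over silently, so your argument is in fact slightly more careful there.
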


\begin{proof}
To prove \eqref{KBound}, we substitute the definitions of $T^{-1}$ and $K$ to find that 
\[
T^{-1}Ku(x,\theta) = \int_0^{\|x-x_{\theta_-}\|}\alpha(x,x-t\theta)\int_{S^{n-1}}k(x-t\theta,\theta, \theta')u(x-t\theta, \theta')\, d\theta' \, dt.
\]
Keeping in mind that $\alpha$ and $k$ are positive, this gives us
\[
|T^{-1}Ku(x,\theta)| \leq \|u\|_{L^{\infty}(\Omega \times S^{n-1})}\int_0^{\|x-x_{\theta_-}\|}\alpha(x,x-t\theta)\int_{S^{n-1}}k(x-t\theta,\theta, \theta')\, d\theta' \, dt.
\]
Then condition \eqref{Nongeneration} implies that 
\[
|T^{-1}Ku(x,\theta)| \leq \|u\|_{L^{\infty}(\Omega \times S^{n-1})}\int_0^{\|x-x_{\theta_-}\|}\alpha(x,x-t\theta)\sigma(x-t\theta)\, dt,
\]
and now it follows from \eqref{AttenuationDerivative} that 
\begin{align*}
   |T^{-1}Ku(x,\theta)| &\leq \|u\|_{L^{\infty}(\Omega \times S^{n-1})}\int_0^{\|x-x_{\theta_-}\|}-\partial_t \alpha(x,x-t\theta)\, dt \\
   &= (1 - \alpha(x,x_{\theta_-}))\|u\|_{L^{\infty}(\Omega \times S^{n-1})}.\\
\end{align*}
Since $\alpha(x,x_{\theta_-})$ is bounded below by a constant depending only on $\Omega$ and $\sigma$, we get \eqref{KBound} as desired.  

For the remainder of the proof, it suffices to note that $u$ solves \eqref{RTE} with boundary condition $u|_{\partial \Omega_-} = f$ if and only if 
\[
u - T^{-1}Ku = Jf.
\]
Then the conclusion of the proof follows from \eqref{KBound} (see also the proofs in ~\cite{Chu, EggSch}).

\end{proof}

%The proof follows the method in ~\cite{Chu, EggSch}, with the estimate on $T^{-1}K$ proceeding from estimation of the integrals in conjunction with \eqref{Nongeneration} .

As noted in the introduction, we are in particular interested in solutions for the RTE with tightly focused boundary conditions designed using \eqref{ApproxIdentity}.  In this case we have the following propositions.

%[Single Collision Term Proposition]

\begin{proposition}\label{OutputForms}
Let $(a,\zeta) \in \partial \Omega_-$ and define $u^{\e}_{a,\zeta}$ and $U^{\e}_{a,\zeta}$ to be solutions to the RTE \eqref{RTE} with boundary conditions defined in \eqref{DefnU}.  Then

i) if $c$ is the first point on $\partial \Omega$ reached from $a$ in direction $\zeta$, then 
\[
\lim_{\e \rightarrow 0} u_{a, \zeta}^{\e}(c,\zeta) = \alpha(a,c)
\]

ii) if $(d,\eta)$ is any other point on $\partial \Omega_-$, 
\[
\lim_{\e \rightarrow 0} U_{a, \zeta}^{\e}(d,\eta) = \alpha(a,x)k(x,\zeta,\eta) \alpha(x,d),
\] 
\indent \indent where $x$ is the intersection point of lines $a+s'\zeta$ and $d +s\eta$.

\end{proposition}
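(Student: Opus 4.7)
The plan is to apply the collision expansion from Proposition \ref{CollisionExpansion}. Writing $f := \varphi_-(\cdot - a, \cdot - \zeta)$ for the boundary profile of $u^\e_{a,\zeta}$ (so that $U^\e_{a,\zeta} = \e^{-n} u^\e_{a,\zeta}$ by linearity of the RTE), we expand
\[
u^\e_{a,\zeta} = Jf + T^{-1} K J f + \sum_{j \geq 2}(T^{-1} K)^j J f
\]
and analyze each term at the target point as $\e \to 0$.

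For (i), evaluated at $(c,\zeta)$: since $c_{\zeta-} = a$, the ballistic contribution is $Jf(c,\zeta) = \alpha(c,a)\psi(0)^2 = \alpha(a,c)$ once one imposes the normalization $\psi(0) = 1$. Each scattering correction $(T^{-1}K)^j Jf(c,\zeta)$ is bounded by $\|Jf\|_\infty$ times $O(\e^{n-1})$, where the $\e^{n-1}$ arises from the angular integration in the innermost $K$ over the $\e$-support of $f$ in the $\theta'$ variable, so these terms vanish as $\e \to 0$.

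For (ii), the ballistic term $Jf(d,\eta)$ vanishes for small $\e$ since $\eta$ is bounded away from $\zeta$. The main calculation is the single-scattering limit of
\[
\e^{-n} T^{-1}KJf(d,\eta) = \e^{-n}\!\!\int_0^{\|d-d_{\eta-}\|}\!\!\!\int_{S^{n-1}}\!\!\! \alpha(d,y)\,k(y,\eta,\theta')\,\alpha(y,y_{\theta'-})\,\psi\!\left(\tfrac{y_{\theta'-}-a}{\e}\right)\psi\!\left(\tfrac{\theta'-\zeta}{\e}\right) d\theta'\,dt,
\]
where $y = d - t\eta$. The two cutoffs force $\theta'$ to be $\e$-close to $\zeta$ and the backward exit point $y_{\theta'-}$ to be $\e$-close to $a$; since the forward $\zeta$-ray from $a$ meets the backward $\eta$-ray from $d$ only at $x$, the integrand concentrates near $t = t_0$ (with $d - t_0\eta = x$). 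Substituting $t = t_0 + \e s$ and $\theta' = \zeta + \e\omega$ with $\omega \in T_\zeta S^{n-1}$, one linearizes the boundary map to obtain $(y_{\theta'-} - a)/\e \to \xi(s,\omega)$ for an explicit linear function $\xi$ determined by $\zeta$, $\eta$, and $\nu(a)$, while $dt\,d\theta' = \e^n\,ds\,d\omega\,(1 + O(\e))$; the factor $\e^n$ cancels the prefactor. The continuous factors $\alpha$ and $k$ converge uniformly to $\alpha(d,x)k(x,\eta,\zeta)\alpha(x,a) = \alpha(a,x)k(x,\zeta,\eta)\alpha(x,d)$ by symmetry of $\alpha$ and \eqref{kSymmetry}. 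The remaining integral $\iint \psi(\xi(s,\omega))\psi(\omega)\,ds\,d\omega$ is a positive geometric constant, and the ``normalization conditions'' on $\psi$ referenced in the statement are chosen so that this constant equals $1$. A parallel support analysis controls the multi-scattering corrections $(T^{-1}K)^j Jf(d,\eta)$ for $j \geq 2$: each additional scattering introduces an angular integration over a set of measure $O(\e^{n-2})$ (directions aiming backward into the primary $\zeta$-tube), yielding $(T^{-1}K)^j Jf(d,\eta) = O(\e^{2n-2})$, so $\e^{-n}(T^{-1}K)^j Jf(d,\eta) = O(\e^{n-2}) \to 0$ under the standing hypothesis $n \geq 3$.

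The main obstacle will be making the single-scattering change of variables rigorous: one must verify that the linearization of $(t,\theta') \mapsto y_{\theta'-}$ is non-degenerate, which rests on the transversality of the two rays $\{a + s\zeta\}$ and $\{d - t\eta\}$ at their interior intersection $x$, and on $\zeta\cdot\nu(a) \neq 0$. Both conditions hold automatically since $(a,\zeta) \in \partial \Omega_-$ and $x$ lies in $\Omega$.
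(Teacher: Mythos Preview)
Your approach is essentially the same as the paper's: both use the collision expansion, identify the ballistic term as dominant for (i) and the single-scattering term as dominant for (ii), and show the remaining terms vanish in the limit. The main difference is packaging. For the single-scattering limit the paper invokes the Choulli--Stefanov distribution-kernel formalism (writing $U_2$ as an integral of a kernel with delta functions against the boundary profile and then applying two approximate-identity limits in succession), whereas you carry out the equivalent computation directly via the localized change of variables $t = t_0 + \e s$, $\theta' = \zeta + \e\omega$. Your version is more self-contained; the paper's buys brevity by citing \cite{ChoSte}. For the multi-scattering remainder the paper appeals to the $L^1(\partial\Omega_+)$ bound on the kernel $\tilde U_3$ from \cite{ChoSte}, while you give a pointwise support argument yielding $O(\e^{2n-2})$; your reasoning is in the right spirit, though the phrase ``each additional scattering introduces an angular integration over a set of measure $O(\e^{n-2})$'' is not quite the mechanism for $j>2$ --- there one simply uses $\|(T^{-1}K)^{j-2}\|_{\infty\to\infty}<1$ together with the $j=2$ bound.

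One caveat you share with the paper: the claim that the residual integral $\iint \psi(\xi(s,\omega))\psi(\omega)\,ds\,d\omega$ equals $1$ by choice of normalization is delicate, since $\xi$ depends on the geometry of the particular rays (through $\zeta$, $\eta$, and $\nu(a)$). The paper's parallel ``renormalizing $\psi$ if necessary'' has the same issue. This does not affect the structure of the argument, but it is worth being aware that a single fixed $\psi$ cannot make this constant equal $1$ uniformly in the ray configuration.
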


\begin{proof}[Proof of Proposition \ref{OutputForms}]

The proof follows from evaluating the integrals in the collision expansion, $u = (1 + T^{-1}K + (T^{-1}K)^2 + \ldots)Jf$. Moreover, we use the notation $u_i$ to denote the $i^{th}$ term of the collision expansion. That is, $u_1 = Jf$, $u_2 = T^{-1}KJf$, and $u_3 = ((T^{-1}K)^2 + \ldots)Jf$. As well, we denote $u_{a, \zeta}^{\e}(c,\zeta) = u(c, \zeta)$ and $U_{a, \zeta}^{\e}(d,\eta) = U(d, \eta)$ for ease of notation. In part i), one finds that all terms except the initial term go to zero in the limit. Similarly in part ii), all terms except the second term are zero in the limit.  See ~\cite{ChoSte, Chu, FloMarSchBRT} for similar propositions. 

For this proof, recall we mimic the setup of ~\cite{ChoSte} to construct the appropriate boundary condition, $\varphi_{-}$ concentrated at $(x', \theta')$. 

That is, we let $\psi \in C_{0}^{\infty}(\mathbb{R}^{n})$, $0 \leq \psi(x) \leq 1$, and $\psi(0)=1$. Given $\varepsilon>0$, set $$\varphi_{-}(x'', \theta'')= \psi\left(\frac{x''-a}{\varepsilon}\right) \psi\left(\frac{\theta''-\zeta}{\varepsilon}\right).$$ We then define $u$ and $U$ to be solutions to the RTE \eqref{RTE} with boundary conditions
\begin{equation}
u(x,\theta)|_{\partial \Omega_-} = \varphi_{-}(x-a,\theta - \zeta)  \mbox{ and } U(x,\theta)|_{\partial \Omega_-} = \e^{-n}\varphi_{-}(x-a,\theta - \zeta),
\end{equation}
respectively, where $\varphi_-$ acts as an approximation of identity on $\partial \Omega_-$. 

Let us expand first upon the proof of part i). Note we see from Theorem 4.1 in \cite{ChoSte} that all terms except the ballistic term $u_1$ go to zero in the limit. So, let us observe what happens to this ballistic term under the limit.
\begin{align*}
\lim_{\e \rightarrow 0} u(c,\zeta) &= \lim_{\e \rightarrow 0} u_1(c,\zeta)\\ &= \lim_{\e \rightarrow 0} J\varphi_{-}(c, \zeta) \\
&= \lim_{\e \rightarrow 0} \alpha(c,a) \varphi_{-}(a, \zeta) \text{ by definition of } J. \\
&= \alpha(c,a) \text{ since } \lim_{\e \rightarrow 0} \varphi_{-}(a, \zeta) =1 \text{ by our boundary condition.}\\
&= \alpha(a,c) \text{ since } \alpha \text{ is symmetric.}
\end{align*}

To expand on the proof of part ii), we will use Theorem 3.2 from \cite{ChoSte} and denote the second term of the collision expansion of the distribution kernel by  
\begin{align*} \tilde{U_{2}} = &\int_{0}^{\norm{d-d_{\eta_{-}}}} \alpha(d,d-t\eta) \, k(d-t\eta, \eta, \zeta) \\ & \times \fint_{S^{n-1}} \alpha(d-t\eta, (d-t\eta)_{\theta_{-}}) \, \delta_{\{x''\}} ((d-t\eta)_{\theta_{-}}) \, \delta_{\{\theta''\}}(\theta) \, d\theta \, dt.
\end{align*}
According to \cite{ChoSte}, we use the idea of a distribution kernel adapted to our notation to write the second term of our collision expansion as the integral over the incoming boundary of the distribution kernel against our boundary condition. Here, $U_2(d, \eta)$ denotes the second term of our collision expansion, $\tilde{U}_{2}(d, \eta, x'', \theta'')$ denotes the distribution kernel of the second term of our collision expansion, and $\varphi_{-}(x'', \theta'')$ denotes the boundary condition. Moreover, $d\xi(x'', \theta'')$ is the appropriate measure for this integral as seen in \cite{ChoSte} where  $d\xi(x'', \theta'') = d(\theta'') d\mu(x'')$ and $d\mu(x'')$ denotes the surface area of $\partial \Omega$. Now let us examine what happens to the single collision term.
$$U_{2}(d, \eta) = \int_{\partial\Omega_{-}} \tilde{U}_{2}(d, \eta, x'', \theta'') \varepsilon^{-n} \varphi_{-}(x'', \theta'') d\xi(x'', \theta'')$$

Using the definition of the distribution kernel and the boundary condition constructed above, we yield
\begin{align*}
U_{2}(d, \eta) &= \displaystyle \int_{\partial \Omega_{-}} \displaystyle \int_{0}^{\norm{d-d_{\eta_{-}}}} \alpha(d,d-t\eta) \, k(d-t\eta, \eta, \zeta) \fint_{S^{n-1}} \alpha(d-t\eta, (d-t\eta)_{\theta_{-}}) \\
&\hspace{0.8cm}\times \delta_{\{x''\}} ((d-t\eta)_{\theta_{-}}) \delta_{\{\theta''\}}(\theta) \, d\theta \, dt \, \psi \left(\frac{x''-a}{\varepsilon} \right) \psi\left(\frac{\theta''-\zeta}{\varepsilon} \right) \, \varepsilon^{-n} \, d\xi(x'', \theta'').
\end{align*}

Using the fact that $\int_{S^{n-1}} \delta_{\{\theta''\}} \, f(\theta) \, d\theta = f(\theta'')$, we yield
\begin{align*}
U_{2}(d, \eta) &= \displaystyle \int_{\partial \Omega_{-}} \displaystyle \int_{0}^{\norm{d-d_{\eta_{-}}}} \alpha(d,d-t\eta) \, k(d-t\eta, \eta, \zeta) \alpha(d-t\eta, (d-t\eta)_{\theta''_{-}}) \\
&\hspace{0.8cm}\times \delta_{\{x''\}} ((d-t\eta)_{\theta''_{-}}) \, dt \, \psi \left(\frac{x''-a}{\varepsilon} \right) \psi\left(\frac{\theta''-\zeta}{\varepsilon} \right) \, \varepsilon^{-n} \, d\xi(x'', \theta'')
\end{align*}

Breaking down our integral over the incoming boundary as an integral over the boundary and an integral over the unit sphere gives
\begin{align*}
U_{2}(d, \eta) &= \displaystyle \int_{\partial \Omega} \int_{S^{n-1}} \displaystyle \int_{0}^{\norm{d-_{\eta_{-}}}} \alpha(d,d-t\eta) \, k(d-t\eta, \eta, \zeta) \alpha(d-t\eta, (d-t\eta)_{\theta''_{-}}) \\
&\hspace{0.8cm}\times \delta_{\{x''\}} ((d-t\eta)_{\theta''_{-}}) \, dt \, \psi \left(\frac{x''-a}{\varepsilon} \right) \psi\left(\frac{\theta''-\zeta}{\varepsilon} \right) \, \varepsilon^{-n} \, d\theta'' \, d\mu(x'')
\end{align*}

Renormalizing $\psi$ if necessary and then using the fact that $$\lim_{\varepsilon \to 0} \int_{S^{n-1}} \varepsilon^{1-n} \, \psi \left( \frac{\theta'' - \zeta}{\varepsilon} \right) \, f(\theta'') \, d \theta'' = f(\zeta),$$ we yield
\begin{align*}
\lim_{\varepsilon \to 0} U_{2}(d, \eta) &= \lim_{\varepsilon \to 0} \displaystyle \int_{\partial \Omega} \displaystyle \int_{0}^{\norm{d-d_{\eta_{-}}}} \alpha(d,d-t\eta) \, k(d-t\eta, \eta, \zeta) \alpha(d-t\eta, (d-t\eta)_{\zeta_{-}}) \\
&\hspace{0.8cm}\times \delta_{\{x''\}} ((d-t\eta)_{\zeta_{-}}) \, dt \, \psi \left(\frac{x''-a}{\varepsilon} \right) \, \varepsilon^{-1} \, d\mu(x'')
\end{align*}

%Maybe include some of my logic for this even though it was for a pretty nice set up and we discussed that we will need to work to sharpen it soon. Maybe use an assertion.
%\noindent Assertion: $\int \delta_{a}(b) \, f(a) \, da = f(b)$.\\
%Justification:
%\begin{align*}
%    \int \delta_{a}(b) f(a) da &= \int \delta_{0}(b-a)f(a)da \\
%    &= (f \ast \delta)(b) \\
%    &= (\delta \ast f)(b) \\
%    &= \int f(b-a) \delta_{0}(a) da \\
%    &= f(b-a) |_{a=0} \\
%    &= f(b)
%\end{align*}
Using the fact that $\int \delta_{a}(b) \, f(a) \, da = f(b)$, we find that $\displaystyle \lim_{\varepsilon \to 0} U_{2}(d, \eta)$ can be rewritten as

$$\lim_{\varepsilon \to 0} \displaystyle \int_{0}^{\norm{d-d_{\eta_{-}}}} \alpha(d,d-t\eta) \, k(d-t\eta, \eta, \zeta) \alpha(d-t\eta, (d-t\eta)_{\zeta_{-}}) \, \psi \left(\frac{ (d-t\eta)_{\zeta_{-}}-a}{\varepsilon} \right) \, \varepsilon^{-1}  \, dt$$

%I feel like this needs expanded upon/thought about more for how to explain here.
Then, note $\psi$ is only supported when $(d-t\eta)_{\zeta_{-}}$ is close to $a$. In fact, again renormalizing $\psi$ as necessary, $$\lim_{\varepsilon \to 0} \int_{0}^{\norm{d-d_{\eta_{-}}}} \varepsilon^{-1} \, \psi \left( \frac{(d-t\eta)_{\zeta_{-}} - a}{\varepsilon} \right) \, f(t) \, dt = f(t_{0})$$ 

where $t_{0}$ is such that $(d-t_{0}\eta)_{\zeta_{-}} - a = 0$. So,

$$\lim_{\varepsilon \to 0} U_{2}(d, \eta) = \lim_{\varepsilon \to 0} \alpha(d, d-t_{0}\eta) \, k(d-t_{0}\eta, \eta, \zeta) \, \alpha(d-t_{0}\eta, a).$$

Let us note $(d-t_{0}\eta) = x$ as defined in the proposition which yields

$$\lim_{\varepsilon \to 0} U_{2}(d, \eta) = \lim_{\varepsilon \to 0} \alpha(d,x) \, k(x, \eta, \zeta) \, \alpha(x, a).$$

%Note in order to match the notation of our theorem statement simply let $x' = a,$ $\theta' = \zeta$, $x=d$, $\theta = \eta$, $z=x$, and $\zeta = \theta$.

Moreover, note the other terms of our collision expansion aside from the single collision term go to zero when we take our limit. For our ballistic term $U_1$, $J \varphi_-$ is only supported along the line from $a$ to $c$ in the $\zeta$ direction. Our output $(d, \eta)$ is not on this line and thus outside of its support, meaning this ballistic term is zero in our limit. As well, our multiple collision term, $U_3$ also goes to zero in our limit. We can see this by following a similar technique as from \cite{ChoSte} noting $|\tilde{U_3}(d, \eta, x'', \theta'')| \in L^{\infty}(\partial \Omega_-; L^{1}(\partial \Omega_+, d \xi))$ where $\tilde{U_3}$ denotes the distribution kernel of $U_3$ defined such that $U_{3}(d, \eta) = \int_{\partial\Omega_{-}} \tilde{U}_{3}(d, \eta, x'', \theta'') \varphi_{-}(x'', \theta'') d\xi(x'', \theta'')$. Examine then that
\begin{align*}
    \norm{U_{3}(d, \eta)}_{L^1(\partial \Omega_+)} &\leq \int_{\partial \Omega_+} \left| \int_{\partial\Omega_{-}} \tilde{U}_{3}(d, \eta, x'', \theta'') \varphi_{-}(x'', \theta'') \varepsilon^{-n} d\xi(x'', \theta'') \right| d \xi (d, \eta) \\
    &\leq \int_{\partial \Omega_+} \int_{\partial\Omega_{-}} |\tilde{U}_{3}(d, \eta, x'', \theta'')| |\varphi_{-}(x'', \theta'')| \varepsilon^{-n} d\xi(x'', \theta'')  d \xi (d, \eta) \\
    &\leq \int_{\partial \Omega_-} \int_{\partial\Omega_{+}} |\tilde{U}_{3}(d, \eta, x'', \theta'')| d \xi (d, \eta) |\varphi_{-}(x'', \theta'')| \varepsilon^{-n} d\xi(x'', \theta'')  \\ %\text{ by Fubini's Theorem.}\\
    &\leq M \int_{\partial \Omega_-} |\varphi_{-}(x'', \theta'')| \varepsilon^{-n} d\xi(x'', \theta'') \\ %\text{ since } \int_{\partial\Omega_{+}} |\tilde{U}_{3}(d, \eta, x'', \theta'')| d \xi (d, \eta) < M\\
    &\leq M' \varepsilon \text{ if } n\geq 3
\end{align*} 
Note to achieve the final line of the above calculations we write $$\varepsilon^{-n}\varphi_-(x'', \theta'') = \varepsilon^{-n} \psi \left(\frac{x''-a}{\varepsilon} \right) \psi\left( \frac{\theta''- \zeta}{\varepsilon} \right).$$ Note $\psi$ is compactly supported on the boundary of $\mathbb{R}^n$ and a function of $n-1$ variables. So, if we shrink by $\varepsilon$ and change variables we yield $\varepsilon^{-n} \varepsilon^{n-1} \varepsilon^{n-1} = \varepsilon^{n-2}$ which is bounded above by $\varepsilon$ provided $n\geq 3$. Then, by taking the limit as $\varepsilon \to 0$ we see that our multiple collision term, $u_3$ indeed goes to zero. This completes the proof.
\end{proof}

\section{Proof of Theorems \ref{MainTheorem} and \ref{Stability} }\label{ProofofMains}

We begin with the proof of Theorem \ref{MainTheorem}.

\begin{proof}[Proof of Theorem \ref{MainTheorem}]

Suppose $x \in \Omega$, and $\zeta,\eta \in S^{n-1}$. Let $a,b,c$ and $d$ be the first points on $\partial \Omega$ obtained by traveling from $x$ in the directions $-\zeta,-\eta,\zeta,$ and $\eta$, respectively, as illustrated in the diagram below.

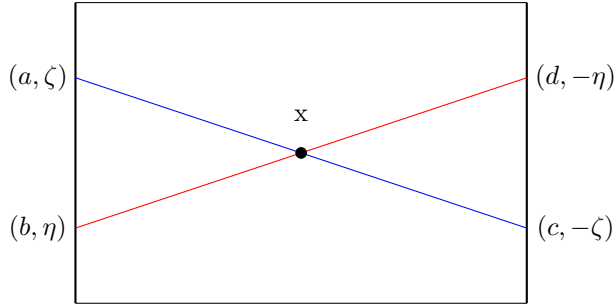
\begin{figure}[hbt!]
\begin{center}
\begin{tikzpicture}
    % Define the coordinates of the square
    \coordinate (A) at (0,2);
    \coordinate (B) at (6,2);
    \coordinate (C) at (6,6);
    \coordinate (D) at (0,6);
    
    % Draw the square
    \draw[thick] (A) -- (D);
    \draw[thick] (B) -- (C);
    \draw (A) -- (B);
    \draw (C) -- (D);
    
    %Draw the rays and scattering point
    \draw[blue] (0,5) -- (3,4);
    \draw[blue] (3,4) -- (6,3);
    \draw[red] (0,3) -- (3,4);
    \draw[red] (6,5) -- (3,4);
    \filldraw[black] (3,4) circle (2 pt);

    %Label the rays and scattering point
    \node at (-.5,5) {$(a, \zeta)$};
    \node at (6.65, 3) {$(c, -\zeta)$};
    \node at (-.5, 3) {$(b, \eta)$};
    \node at (6.65, 5) {$(d,-\eta)$};
    \node at (3, 4.5) {x};
    
\end{tikzpicture}
\end{center}
\caption{Ray Construction for Theorem 4.1}
    \label{fig:4.1Diagram}
\end{figure}

Now let $\varepsilon > 0$ and consider the solutions $u^{\varepsilon}_{a,\zeta}$, $u^{\varepsilon}_{b,\eta}$, $U^{\varepsilon}_{a,\zeta}$, and $U^{\varepsilon}_{b,\zeta}$, with boundary sources defined by \eqref{DefnU}.

By Proposition \ref{OutputForms}, we have 
\begin{align*}
\lim_{\varepsilon \rightarrow 0} u^{\varepsilon}_{a,\zeta}(c,\zeta) = \alpha(a,c) &= \alpha(a,x)\alpha(x,c) \\
\lim_{\varepsilon \rightarrow 0} u^{\varepsilon}_{b,\eta}(d,\eta) = \alpha(b,d) &= \alpha(b,x)\alpha(x,d) \\
\end{align*}
Moreover, by the same proposition, we also have
\begin{align*}
\lim_{\varepsilon \rightarrow 0} U^{\varepsilon}_{a,\zeta}(d,\eta) &= \alpha(a,x)k(x,\zeta,\eta)\alpha(x,d) \\
\lim_{\varepsilon \rightarrow 0} U^{\varepsilon}_{b,\eta}(c,\zeta) &= \alpha(b,x)k(x,\eta,\zeta)\alpha(x,c). \\
\end{align*}

Since \eqref{kSymmetry} implies that $k(x,\zeta,\eta) = k(x,\eta,\zeta)$, the above equations immediately imply that  
\begin{equation}
\lim_{\e \rightarrow 0} \frac{U^{\e}_{a,\zeta}(d,\eta)U^{\e}_{b,\eta}(c,\zeta) }{u^{\e}_{a,\zeta}(c,\zeta) u^{\e}_{b,\eta}(d,\eta)} = k^2(x, \zeta,\eta).
\end{equation}
as claimed.  

\end{proof}

The stability result now follows directly from the formula in Theorem \ref{MainTheorem}.

\begin{proof}[Proof of Theorem \ref{Stability}]

Fix $(x,\zeta, \eta)$ in $\Omega \times S^{n-1} \times S^{n-1}$.  Let $u^{\e}_{a,\zeta}$ and $U^{\e}_{a,\zeta}$ be the solutions to the RTE with coefficients $\sigma_1$ and $k_1$ and boundary conditions given by \eqref{DefnU}, and let $v^{\e}_{a,\zeta}$ and $V^{\e}_{a,\zeta}$ be the corresponding solutions with coefficients $\sigma_2$ and $k_2$. By Theorem \ref{MainTheorem},
\begin{align*}
& k^2_1(x,\zeta,\eta) - k^2_2(x,\zeta,\eta) = \lim_{\e \rightarrow 0} \frac{U^{\e}_{a,\zeta}(d,\eta)U^{\e}_{b,\eta}(c,\zeta) }{u^{\e}_{a,\zeta}(c,\zeta) u^{\e}_{b,\eta}(d,\eta)} - \frac{V^{\e}_{a,\zeta}(d,\eta)V^{\e}_{b,\eta}(c,\zeta) }{v^{\e}_{a,\zeta}(c,\zeta) v^{\e}_{b,\eta}(d,\eta)} \\ 
&= \lim_{\e \rightarrow 0} \frac{U^{\e}_{a,\zeta}(d,\eta)U^{\e}_{b,\eta}(c,\zeta)v^{\e}_{a,\zeta}(c,\zeta) v^{\e}_{b,\eta}(d,\eta)- V^{\e}_{a,\zeta}(d,\eta)V^{\e}_{b,\eta}(c,\zeta)u^{\e}_{a,\zeta}(c,\zeta) u^{\e}_{b,\eta}(d,\eta)}{u^{\e}_{a,\zeta}(c,\zeta) u^{\e}_{b,\eta}(d,\eta)v^{\e}_{a,\zeta}(c,\zeta) v^{\e}_{b,\eta}(d,\eta)}. \\ 
\end{align*}
Using an algebraic formula, we can rewrite $p_1q_1r_1s_1 - p_2q_2r_2s_2$ as 
\[
(p_1-p_2)(q_1r_1s_1) + (q_1-q_2)(p_2r_1s_1) + (r_1-r_2)(p_2q_2s_1) + (s_1-s_2)(p_2q_2r_2).
\]
Applying this fact to the numerator, we find that we can decompose the expression for $k^2_1(x,\zeta,\eta) - k^2_2(x,\zeta,\eta)$ into four terms, the first of which takes the form 
\[
\lim_{\e \rightarrow 0} \frac{(U^{\e}_{a,\zeta}(d,\eta)- V^{\e}_{a,\zeta}(d,\eta))U^{\e}_{b,\eta}(c,\zeta)}{u^{\e}_{a,\zeta}(c,\zeta) u^{\e}_{b,\eta}(d,\eta)} 
\]
after cancellation.  Note that 
\[
(U^{\e}_{a,\zeta}(d,\eta)- V^{\e}_{a,\zeta}(d,\eta)) = (\mathcal{A}_1-\mathcal{A}_2)\varepsilon^{-n}(\varphi_{-}(\cdot - a, \cdot - \zeta))
\]
evaluated at $(d,\eta)$, and consequently
\[
|U^{\e}_{a,\zeta}(d,\eta)- V^{\e}_{a,\zeta}(d,\eta)| \leq |\mathcal{A}_1 - \mathcal{A}_2|_{\infty}.
\]
Meanwhile, it follows from the formulas given in Proposition \ref{OutputForms} that $\lim_{\e \rightarrow 0} U^{\e}_{b,\eta}(c,\zeta)$ is bounded above by $M$, and $u^{\e}_{a,\zeta}(c,\zeta)$ and $u^{\e}_{b,\eta}(d,\eta)$ are both bounded below by a quantity depending only on $M$ and the diameter of $\Omega$.  

The other three terms are bounded similarly, and the desired result follows.  
\end{proof}

\section{Multi-Frequency Theorems}\label{MFT}

In this section we consider an alternative model for light propagation in which photons change frequency after collisions.  In this setting, we consider a scenario with an initial light source of frequency $e$ is introduced at the boundary, and photons, upon being scattered once, adopt a new lower frequency $f$.  (Additional scattering events would change the frequency yet again, but as we will see, we will not measure at these frequencies, and can disregard the analysis of these terms). Following ~\cite{FloMarSchNonR}, we consider the system of equations 
\begin{equation}\label{MultiFreqRTE}
\begin{split}
\theta \cdot \grad_x u(x,\theta) &= -\sigma_e(x)u(x,\theta) \\
\theta \cdot \grad_x v(x,\theta) &= -\sigma_f(x)v(x,\theta) + \int_{S^{n-1}}k(x,\theta,\theta') u(x,\theta') \, d\theta'. \\\
\end{split}
\end{equation}
Here $u$ represents the specific intensity of the light at the original frequency $e$, with boundary source $\phi$,  and $v$ represents the specific intensity of the light at the post-collision frequency $f$.  As in ~\cite{FloMarSchNonR}, we can also understand this as a model of fluoresence, where $e$ is the excitation frequency and $f$ is the fluoresence frequency.  We assume that $\sigma_e$, $\sigma_f$, and $k$ are subject to the same restrictions as in Section \ref{Introduction}, 
%%%%%%%%%%%%%%%%%%%%%%%%%%%%%%%%%%%%%%%%%%%%%%%%%%%%
%%%%%%%%%%%%%%%%%%%%%%%%%%%%%%%%%%%%%%%%%%%%%%%%%%%%
%%%%%%%%%%%%%%%%%%%%%%%%%%%%%%%%%%%%%%%%%%%%%%%%%%%%
with the additional condition that 
\begin{equation}\label{KAngle}
k(x,\theta_1,\theta_2) = k(x, \theta_1',\theta_2') \mbox{ whenever } \theta_1 \cdot \theta_2 = \theta_1' \cdot \theta_2'
\end{equation}
which is to say the dependence of $k(\cdot ,\theta_1,\theta_2)$ on $\theta_1$ and $\theta_2$ comes only from the angle between $\theta_1$ and $\theta_2$.

Following the analysis in Section \ref{Collisions}, we see that \eqref{MultiFreqRTE}, equipped with the boundary conditions 
\begin{equation}\label{MultiFreqBC}
(u,v)|_{\partial \Omega_-} = (\varphi,\psi), 
\end{equation}
has the solutions 
\begin{equation}\label{MFSolutions}
u = J_e\varphi \quad \mbox{ and } \quad  v = T^{-1}_fKu + J_f \psi = T^{-1}_fKJ_e \varphi + J_f \psi,
\end{equation}
where $J_e, J_f$ and $T^{-1}_f$ represent the operators $J$ and $T^{-1}$ as defined in Section \ref{Collisions} with absorption coefficients $\sigma_e$ and $\sigma_f$.  In particular, \eqref{MultiFreqRTE} equipped with boundary conditions \eqref{MultiFreqBC} has unique solutions $u,v \in L^p(\Omega \times S^{n-1})$ for each pair $\varphi, \psi \in L^p(\partial \Omega_-)$.

This brings us to the main theorems for this section.

\begin{theorem}\label{MFTheorem}
Let $x \in \Omega$. There exist eight sets of boundary conditions $(\varphi^{\e}_j, \psi^{\e}_j) \in L^p(\partial \Omega_-^2)$ $1 \leq j \leq 8$, together with four points $(a,-\zeta)$ and $(b,-\eta)$, $(c,\zeta)$ and $(d,\eta)$ on $\partial \Omega_+$, such that 
\begin{equation}\label{MFFormula}
\lim_{\e \rightarrow 0} \frac{v^{\e}_1(d,\eta)v^{\e}_2(c,\zeta)v^{\e}_3(b,-\eta)v^{\e}_4(a,-\zeta)}{u^{\e}_5(c,\zeta) u^{\e}_6(d,\eta)v^{\e}_7(c,\zeta)v^{\e}_8(d,\eta)} = k^4(x, \zeta,\eta).
\end{equation}
\end{theorem}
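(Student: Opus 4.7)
The strategy is to pair four single-scattering measurements of $v$ (each contributing a factor of $k$) with four ballistic measurements (which reconstruct the attenuation) so that all absorptions cancel and $k^4(x,\zeta,\eta)$ survives in the limit. Geometrically, I take $a,b,c,d\in\partial\Omega$ exactly as in Theorem~\ref{MainTheorem}: the rays $a+s\zeta$ and $b+s\eta$ cross at $x$, with $c,d$ the corresponding exit points, so that $(a,\zeta),(b,\eta),(c,-\zeta),(d,-\eta)\in\partial\Omega_-$ while $(c,\zeta),(d,\eta),(a,-\zeta),(b,-\eta)\in\partial\Omega_+$. The crucial observation, enabled by \eqref{KAngle}, is that the four scattering events $\zeta\to\eta$, $\eta\to\zeta$, $-\zeta\to-\eta$, and $-\eta\to-\zeta$ all have cosine $\zeta\cdot\eta$ and therefore assign the common value $k(x,\zeta,\eta)$.

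For the numerator I choose boundary pairs $(\varphi_j^\e,\psi_j^\e)=(\e^{-n}\varphi_-,0)$ for $j=1,2,3,4$, with $\varphi_-$ concentrated (in the sense of \eqref{ApproxIdentity}) at the incoming states $(a,\zeta)$, $(b,\eta)$, $(c,-\zeta)$, $(d,-\eta)$ respectively. Since $\psi_j^\e=0$, \eqref{MFSolutions} reduces $v_j^\e$ to the pure single-scattering expression $T^{-1}_fKJ_e\varphi_j^\e$, with no multiple-collision tail to control. Writing $\alpha_e,\alpha_f$ for the attenuation \eqref{Attenuation} with $\sigma$ replaced by $\sigma_e,\sigma_f$, and adapting Proposition~\ref{OutputForms}(ii) so that the pre-collision leg is attenuated by $\alpha_e$ and the post-collision leg by $\alpha_f$, I obtain
\begin{align*}
\lim_{\e\to0}v_1^\e(d,\eta)&=\alpha_e(a,x)\,k(x,\zeta,\eta)\,\alpha_f(x,d),\\
\lim_{\e\to0}v_2^\e(c,\zeta)&=\alpha_e(b,x)\,k(x,\eta,\zeta)\,\alpha_f(x,c),\\
\lim_{\e\to0}v_3^\e(b,-\eta)&=\alpha_e(c,x)\,k(x,-\zeta,-\eta)\,\alpha_f(x,b),\\
\lim_{\e\to0}v_4^\e(a,-\zeta)&=\alpha_e(d,x)\,k(x,-\eta,-\zeta)\,\alpha_f(x,a).
\end{align*}
By \eqref{KAngle} every $k$ factor equals $k(x,\zeta,\eta)$, so upon invoking symmetry of the attenuation the numerator limit becomes $k^4(x,\zeta,\eta)\prod_{p\in\{a,b,c,d\}}\alpha_e(p,x)\alpha_f(p,x)$.

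For the denominator I use $(\varphi_j^\e,0)$ for $j=5,6$ concentrated at $(a,\zeta),(b,\eta)$, which produce ballistic $u$-measurements, together with $(0,\psi_j^\e)$ for $j=7,8$ concentrated at $(a,\zeta),(b,\eta)$, which produce ballistic $v$-measurements via $v=J_f\psi_j^\e$. Proposition~\ref{OutputForms}(i) applied with $\sigma_e$ or $\sigma_f$ respectively yields limits $\alpha_e(a,c)$, $\alpha_e(b,d)$, $\alpha_f(a,c)$, $\alpha_f(b,d)$; factoring each of these through $x$ assembles the denominator product into $\prod_{p\in\{a,b,c,d\}}\alpha_e(p,x)\alpha_f(p,x)$, matching the numerator absorption product exactly. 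The ratio in \eqref{MFFormula} is therefore $k^4(x,\zeta,\eta)$.

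The principal obstacle here is combinatorial rather than analytic: one must simultaneously identify four scattering configurations that all traverse $x$ with the same scattering angle, so that \eqref{KAngle} forces a common value of $k$, and arrange auxiliary ballistic measurements whose attenuation factors reassemble into the very same product over $\{a,b,c,d\}$. Once the right geometric bookkeeping is in place, extending Proposition~\ref{OutputForms} to the multi-frequency setting is routine, since scattered light never scatters again in this model and so no higher-order contributions to $v$ need to be bounded.
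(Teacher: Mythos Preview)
Your proposal is correct and follows essentially the same approach as the paper's proof: the eight boundary conditions, the geometric setup of $a,b,c,d$, the resulting limits for $v_1^\e,\dots,v_4^\e$ and $u_5^\e,u_6^\e,v_7^\e,v_8^\e$, and the use of \eqref{KAngle} to equate the four $k$ factors all match the paper exactly. The only minor omission is that you do not explicitly state that the $j=5,6$ boundary data carry no $\e^{-n}$ factor (as required for Proposition~\ref{OutputForms}(i) to apply), but this is clear from context.
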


For an explanation of how the boundary conditions are chosen, see the proof below. Note that all boundary conditions can be thought of as being concentrated near the four boundary points $(a,\zeta),(b,\eta), (c,-\zeta),$ and $(d,-\eta)$. 

To state the corresponding stability result, define the albedo map $\mathcal{A}^{F}: L^{\infty}(\partial \Omega_-^2) \rightarrow L^{\infty}(\partial \Omega_+^2)$ by $\mathcal{A}^{F}(\varphi,\psi) = (u,v)|_{\partial \Omega_+}$, where $(u,v)$ is the solution to \eqref{MultiFreqRTE} with boundary value $(\varphi,\psi)$ on $\partial \Omega_-$.   

\begin{theorem}\label{MFStability}
Suppose $\sigma_{e,1}, \sigma_{f,1}, \sigma_{e,2}, \sigma_{f,2}$ and $k_1, k_2$ are a priori bounded above by a constant $M$, and define albedo maps $\mathcal{A}^{F}_1$ and $\mathcal{A}^F_2$ respectively.  Then there exists a constant $C$ depending only on $M$ and $\Omega$ such that 
\[
\|k_1^4 - k^4_2\|_{L^{\infty}(\Omega \times S^{n-1} \times S^{n-1})} \leq C\|\mathcal{A}^F_1 - \mathcal{A}^F_2\|_{\infty}.
\]
\end{theorem}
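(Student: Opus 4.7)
The plan is to mirror the argument used to prove Theorem \ref{Stability}, with the product structure of the algebraic formula extended from four factors to eight. First I would fix $(x,\zeta,\eta) \in \Omega \times S^{n-1} \times S^{n-1}$ and let $(\varphi_j^\e,\psi_j^\e)$, $j=1,\ldots,8$, be the boundary conditions supplied by Theorem \ref{MFTheorem}. For each $j$ and each coefficient set $i \in \{1,2\}$, let $(u_{j,i}^\e,v_{j,i}^\e)$ denote the corresponding solution to \eqref{MultiFreqRTE}. Applying Theorem \ref{MFTheorem} twice, once with the coefficients $(\sigma_{e,1},\sigma_{f,1},k_1)$ and once with $(\sigma_{e,2},\sigma_{f,2},k_2)$, expresses $k_1^4(x,\zeta,\eta) - k_2^4(x,\zeta,\eta)$ as the $\e \to 0$ limit of a difference of two ratios, each of whose numerator and denominator is a product of four of the boundary values $v_{j,\cdot}^\e$ and $u_{j,\cdot}^\e$.

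Next I would combine the two ratios over a common denominator $D_1 D_2$. The new numerator is a difference of two products of eight factors, $\prod_{i=1}^{8} p_i - \prod_{i=1}^{8} q_i$, where $p_i$ is computed from coefficient set $1$ and $q_i$ from coefficient set $2$. Applying the telescoping identity
\[
\prod_{i=1}^{8} p_i - \prod_{i=1}^{8} q_i \;=\; \sum_{j=1}^{8} \Bigl(\prod_{i<j} q_i\Bigr)\,(p_j-q_j)\,\Bigl(\prod_{i>j} p_i\Bigr)
\]
decomposes the numerator into eight terms, each isolating a single difference of the form $v_{j,1}^\e(\cdot) - v_{j,2}^\e(\cdot)$ or $u_{j,1}^\e(\cdot) - u_{j,2}^\e(\cdot)$. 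By linearity of \eqref{MultiFreqRTE} and the definition of $\mathcal{A}^F$, each such difference, evaluated at the appropriate boundary point, is the corresponding component of $(\mathcal{A}^F_1 - \mathcal{A}^F_2)(\varphi_j^\e,\psi_j^\e)$, and is therefore controlled by $\|\mathcal{A}^F_1 - \mathcal{A}^F_2\|_\infty$ in the same sense used in the proof of Theorem \ref{Stability}. The remaining seven factors in each telescoping term have well-defined $\e \to 0$ limits by a multi-frequency analogue of Proposition \ref{OutputForms}, obtained directly from the representation \eqref{MFSolutions}; these limits are products of attenuation factors $\alpha$ and values of $k$, hence uniformly bounded above in terms of $M$. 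Meanwhile the denominator factors appearing in $D_1 D_2$ tend to strictly positive attenuation factors bounded below by a constant depending only on $M$ and $\mathrm{diam}(\Omega)$. Summing the eight contributions and sending $\e \to 0$ then yields the asserted Lipschitz bound.

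The main obstacle is the combinatorial book-keeping rather than any new analysis. One must specify precisely which of the eight boundary values plays the role of each $p_j$ and $q_j$, verify that every difference $p_j - q_j$ is produced by $\mathcal{A}^F_1 - \mathcal{A}^F_2$ acting on a common boundary datum $(\varphi_j^\e,\psi_j^\e)$, and track the various $\e^{-n}$ scalings built into the boundary sources so that they cancel against the corresponding scalings in the surviving factors, exactly as in the single-frequency case. A smaller technical point is to supply the multi-frequency version of Proposition \ref{OutputForms}, which is a direct consequence of \eqref{MFSolutions} and the distribution-kernel calculation already carried out for the single-frequency case, so no essentially new ideas should be required.
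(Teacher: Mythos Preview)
Your proposal is correct and follows exactly the route the paper indicates: the paper explicitly states that Theorem \ref{MFStability} follows from the formula in Theorem \ref{MFTheorem} ``in the same way that Theorem \ref{Stability} follows from the formula in Theorem \ref{MainTheorem},'' and gives no further details. Your eight-factor telescoping argument, together with the bounds on numerator and denominator limits coming from \eqref{MFSolutions}, is precisely that adaptation.
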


The proof of Theorem \ref{MFStability} follows from the formula in Theorem \ref{MFTheorem} in the same way that Theorem \ref{Stability} follows from the formula in Theorem \ref{MainTheorem}, so it remains only to prove Theorem \ref{MFTheorem}.  

\begin{proof}[Proof of Theorem \ref{MFTheorem}]

Suppose $x \in \Omega$, and $\zeta,\eta \in S^{n-1}$. Define $a,b,c$ and $d$ as in the proof of Theorem \ref{MainTheorem}. For $j = 1, \ldots, 8$ we define $(u^{\e}_j, v^{\e}_j)$ to be the solution to \eqref{MultiFreqRTE} with boundary conditions 
\begin{align*}
(u^{\e}_1, v^{\e}_1) &= (\e^{-n}\varphi_{-}(\cdot -a,\cdot - \zeta), 0) \\
(u^{\e}_2, v^{\e}_2) &= (\e^{-n}\varphi_{-}(\cdot -b,\cdot - \eta), 0) \\
(u^{\e}_3, v^{\e}_3) &= (\e^{-n}\varphi_{-}(\cdot -c,\cdot + \zeta), 0) \\
(u^{\e}_4, v^{\e}_4) &= (\e^{-n}\varphi_{-}(\cdot -d,\cdot + \eta), 0) \\
(u^{\e}_5, v^{\e}_5) &= (\varphi_{-}(\cdot -a,\cdot - \zeta), 0) \\
(u^{\e}_6, v^{\e}_6) &= (\varphi_{-}(\cdot -b,\cdot - \eta), 0) \\
(u^{\e}_7, v^{\e}_7) &= (0, \varphi_{-}(\cdot -a,\cdot - \zeta)) \\
(u^{\e}_8, v^{\e}_8) &= (0, \varphi_{-}(\cdot -b,\cdot - \eta)). \\
\end{align*}

Applying the discussion in Section \ref{Collisions} to \eqref{MFSolutions}, we find that 
\begin{align*}
\lim_{\e \rightarrow 0}v^{\e}_1(d,\eta) &= \alpha_e(a,x)k(x,\zeta,\eta)\alpha_f(x,d)  \\
\lim_{\e \rightarrow 0}v^{\e}_2(c,\zeta) &= \alpha_e(b,x)k(x,\eta,\zeta)\alpha_f(x,c) \\
\lim_{\e \rightarrow 0}v^{\e}_3(b,-\eta) &= \alpha_e(c,x)k(x,-\zeta,-\eta)\alpha_f(x,b)\\
\lim_{\e \rightarrow 0}v^{\e}_4(a,-\zeta) &= \alpha_e(d,x)k(x,-\eta,-\zeta)\alpha_f(x,a) \\
\end{align*}
and
\begin{align*}
\lim_{\e \rightarrow 0}u^{\e}_5(c,\zeta) = \alpha_e(a,c) &= \alpha_e(a,x)\alpha_e(x,c)  \\
\lim_{\e \rightarrow 0}u^{\e}_6(d,\eta) = \alpha_e(b,d) &= \alpha_e(b,x)\alpha_e(x,d) \\
\lim_{\e \rightarrow 0}v^{\e}_7(c,\zeta) = \alpha_f(a,c) &= \alpha_f(a,x)\alpha_f(x,c) \\
\lim_{\e \rightarrow 0}v^{\e}_8(d,\eta) = \alpha_f(b,d) &= \alpha_f(b,x)\alpha_f(x,d),\\
\end{align*}
where $\alpha_e$ and $\alpha_f$ represent the attenuation factors for $\sigma_e$ and $\sigma_f$ respectively.  

Combining these formulas and using \eqref{KAngle} gives \eqref{MFFormula}, which finishes the proof.

\end{proof}
%%%%%%%%%%%%%%%%%%%%%%%%%%%%%%%%%%%%%%%%%%%%%%%%%%%%
%%%%%%%%%%%%%%%%%%%%%%%%%%%%%%%%%%%%%%%%%%%%%%%%%%%%
%%%%%%%%%%%%%%%%%%%%%%%%%%%%%%%%%%%%%%%%%%%%%%%%%%%%

\section{Applications and Conclusions}\label{Conclusions} 

As discussed in the introduction, the inversion of the X-ray transform forms the mathematical basis for modern day CT scans, in which actual X-rays are sent through a patient or object to be imaged, and the boundary measurements are used to reconstruct a 3D image of the optical parameters on the inside.  

The main results of these notes suggest alternative procedures for obtaining a 3D image of optical parameters, the details of which depend on whether we consider the models of Theorem \ref{MainTheorem}, where post-collision photons retain the same frequency as before, or the model of Theorem \ref{MFTheorem}, where post-collision photons change frequency.  

In the single frequency model, the reconstruction of Theorem \ref{MainTheorem} requires only two X-ray beams to pass through a point $x$ in the region to be imaged in order to reconstruct an optical parameter at $x$.  

Therefore, for any two fixed directions $\zeta$ and $\eta$, any arrangement of sources and detectors that allows for each point $x$ to be illuminated and viewed from these directions, as in Figure 1, suffices to reconstruct a 3D image of optical parameters.  

For example, consider the diagram below in which narrow beam X-ray sources angled in directions $\zeta$ and $\eta$ are moved over the planes $A$ and $B$, respectively, with corresponding detectors along $C$ and $D$, enclosing the region to be imaged on the interior.  

\begin{center}
\includegraphics[scale=0.3]{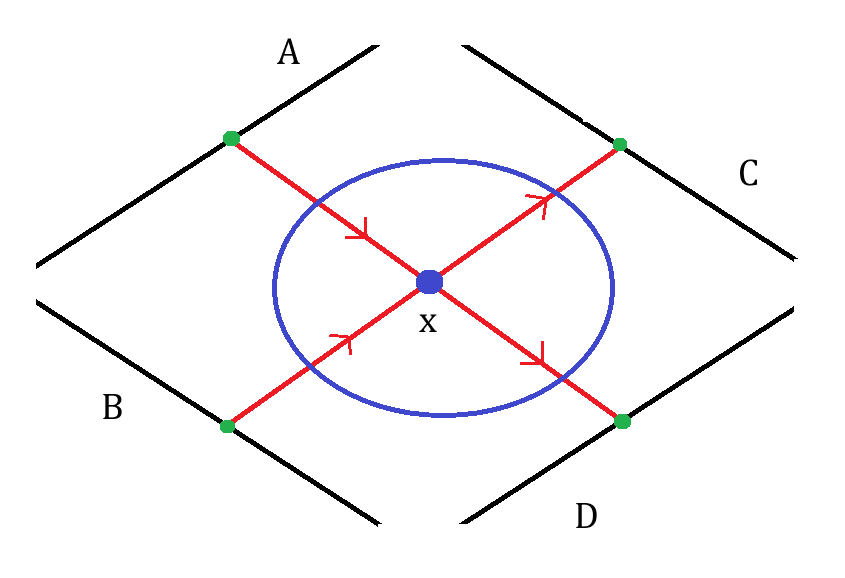}

{\tiny Figure 2: Sources are moved across $A$ and $B$ at fixed angles, and resulting beams are measured at the corresponding points on $C$ and $D$.}
\end{center}

Up to the necessary discrete approximation, each point $x$ in the region to be imaged is then the target of two X-ray beams emitted from $A$ and $B$, in the directions $\zeta$ and $\eta$, and can be viewed from corresponding detectors on $C$ and $D$, as depicted in the diagram.

This sets up the situation in Figure 1, so by applying sources and measurements described in Theorem \ref{MainTheorem} and applying the formula there gives $k^2(x,\zeta, \eta)$. Repeating for each $x$ in the domain gives a 3D reconstruction of $k^2(x,\zeta,\eta)$ as a function of $x$.  

As mentioned in the discussion of Theorem \ref{Stability}, the mathematical stability of the reconstruction described by Theorem \ref{MainTheorem} is substantially better than that of the standard X-ray transform, potentially leading to improved reconstructions with less data. In addition, the fact that reconstructions are done point by point raises the possibility of local reconstruction of areas of interest, even if obstructions (with unreasonably high absorption, say) prevent global reconstruction.

Other arrangements are also possible, especially if additional assumptions are made on the form of $k$.  For example, if $k(x,\zeta, \eta)$ takes the form 
\[
k(x,\zeta,\eta) = s(x)\kappa(\zeta,\eta)
\]
for some known $\kappa(\zeta,\eta)$ (e.g., a Henyey-Greenstein phase function ~\cite{HenGre}), then for any choice of $\zeta$, $\eta$, the reconstruction of $k(x,\zeta,\eta)$ can be divided pointwise by $\kappa(\zeta,\eta)$ to obtain a reconstruction of $s(x)$.  This leaves open the possibility of other geometric arrangements of sources and detectors. For example, if a horizontal plane is illuminated with a fan of X-ray beams, and imaged from a point above the plane, scattering measurements for the whole plane can be made simultaneously, albeit at different scattering angles. Supplementing with appropriate measurements of the ballistic rays, one obtain the measurements needed to apply the formula in Theorem \ref{MainTheorem}.  

Similar setups can also be considered for the multifrequency model, albeit with additional complications due to the extra boundary sources required by Theorem \ref{MFTheorem}.  As in the single frequency case, Theorem \ref{MFStability} guarantees improved mathematical stability in the reconstruction, giving the potential for improved reconstructions with less data.


\begin{thebibliography}{99}

%Caro-Dos Santos Ferreira-Ruiz: 

%\bibitem{BalChuSch} G. Bal, F.J. Chung, and J.C. Schotland. A radiative transport model for ultrasound modulated bioluminescence tomography. \emph{SIAM J. Math. Anal.} \textbf{48}-2 (2016), p 1332-1347.

%\bibitem{Amb} Ambartsoumian, G. "Inversion of the V-line Radon transform in a disc and its applications in imaging." Computers & Mathematics with Applications 64, no. 3 (2012): 260-265.

\bibitem{AmbSurvey} Ambartsoumian, G. (2019). V-line and conical Radon transforms with applications in imaging. \emph{The Radon Transform: The First 100 Years and Beyond}, \textbf{22}, 143.

\bibitem{ArrSch} S. Arridge and J.C. Schotland. Optical tomography: forward and inverse problems. \emph{Inv. Prob.} \textbf{25}-12, (2009) 123010.

\bibitem{ChoSte} M. Choulli and P. Stefanov. An inverse boundary value problem for the stationary transport equation. \emph{Osaka J. Math.}, \textbf{36}, (1999), p. 87-104.

 \bibitem{Chu} F.J. Chung. Inverse radiative transport with local data. \emph{Inv. Prob. and Imag.}, \textbf{17}-2 (2023), 532-541.

\bibitem{ChuSch} F.J. Chung and J.C. Schotland. Inverse transport and acousto-optic imaging. \emph{SIAM J. Math. Anal.} \textbf{49}-6 (2017), 055016.

\bibitem{DauLio} R. Dautray and J.-L. Lions. \emph{Mathematical Analysis and Numerical Methods for Science and Technology vol. 6}. Springer-Verlag, Berlin, 1993.

\bibitem{EggSch} H. Eggers and M. Schlottbom. An $L^p$ theory for stationary radiative transfer. \emph{App. Anal.} \textbf{93} vol. 6, (2014).

\bibitem{FloMarSchBRT} L. Florescu, V.A. Markel, and J.C. Schotland. Inversion formulas for the broken ray transform \emph{Inverse Problems}, \textbf{27} (2011), 025002 

\bibitem{FloMarSchNonR} Florescu, L., Markel, V. A., and Schotland, J. C. Nonreciprocal broken ray transforms with applications to fluorescence imaging. \emph{Inverse Problems}, \textbf{34(9)} (2018), 094002.

\bibitem{GevSchCheGup} Geva, Adam, Yoav Y. Schechner, Yonatan Chernyak, and Rajiv Gupta. X-ray computed tomography through scatter. \emph{Proceedings of The European Conference on Computer Vision (ECCV)}, pp. 34-50. 2018.

\bibitem{HenGre} L.G. Henyey and J.L. Greenstein. Diffuse radiation in the galaxy. \emph{Astrophysical Journal}, vol. 93, p. 70-83 (1941).

\bibitem{ItoTod} S. Ito and N. Toda. Improvement of CT reconstruction using scattered X-rays. \emph{IEICE Transactions on Information and Systems} \textbf{104(8)} (2021), 1378-1385.

\bibitem{KatKry} Katsevich, A., and Krylov, R. (2013). Broken ray transform: inversion and a range condition. Inverse Problems, 29(7), 075008.

\bibitem{KryKat} Krylov, R., and Katsevich, A. (2015). Inversion of the broken ray transform in the case of energy-dependent attenuation. Physics in Medicine \& Biology, 60(11), 4313.

\bibitem{Lou} Louis, A.K. Development of algorithms in CT, the Radon transform and applications to in- verse problems. \emph{Proceedings of Symposia in Applied Mathematics}, \textbf{63}. AMS, 2006.

\bibitem{LouNat} A. Louis and F. Natterer. Mathematical problems of computerized tomography. \emph{Proceedings of the IEEE}, \textbf{71(3)} (1983), p379?389.

\bibitem{Qui} Quinto, Eric Todd. An introduction to X-ray tomography and Radon transforms. \emph{Proceedings of Symposia in Applied Mathematics}, \textbf{63}. AMS, 2006.

\bibitem{TerKucKun} F. Terzioglu, P. Kuchment, and L. Kunyansky. Compton camera imaging and the cone transform: a brief overview. \emph{Inverse Problems} \textbf{34}, no. 5 (2018): 054002.

\bibitem{WalOSu} Walker, Michael R., and Joseph A. O'Sullivan. The broken ray transform: additional properties and new inversion formula. \emph{Inverse Problems} 35, no. 11 (2019): 115003.

\bibitem{ZhaSchMar} F. Zhao, J.C. Schotland, and V.A. Markel. Inversion of the star transform. \emph{Inverse Problems} \textbf{30} (2014) 105001.

\end{thebibliography}
\end{document}